\theoremstyle{plain}
\newtheorem{theorem}{Theorem}
\newtheorem{observation}[theorem]{Observation}
\newcommand{\seq}[1]{\ensuremath{\left( #1 \right)}}
\newcommand{\ceil}[1]{\ensuremath{\left\lceil #1 \right\rceil}}
\newcommand{\floor}[1]{\ensuremath{\left\lfloor #1 \right\rfloor}}
\newcommand{\fr}[1]{\ensuremath{\left\lbrace #1 \right\rbrace}}
\newcommand{\reg}{\operatorname{rg}}
\newcommand{\flfr}[2]{\ensuremath{\left\lfloor \frac{#1}{#2} \right\rfloor}}
\newcommand{\frfr}[2]{\ensuremath{\left\lbrace \frac{#1}{#2} \right\rbrace}}
\newcommand{\df}[4]{\ensuremath{D\left(#1,\ #2,\ #3,\ #4\right)}}
\newcommand{\rf}[4]{\ensuremath{(#1-#2)\Big(\frac{#4-#1-1}{#1 #4-#3}+\frac{#2}{#3-#2 #4}\Big)}}
\newcounter{casenum}
\newenvironment{caseof}{\setcounter{casenum}{1}}{\vskip.5\baselineskip}
\newcommand{\case}[2]{\vskip.5\baselineskip\par\noindent {\bfseries Case \arabic{casenum}:} #1\\#2\addtocounter{casenum}{1}}
\title{How Much Regularity Forces a Sequence to be Graphic?
\footnote{Official contribution of the National Institute of Standards and Technology; not subject to copyright in the United States.}
}
\author{Brian~Cloteaux\\
National Institute of Standards and Technology,\\
Applied and Computational Mathematics Division,\\
Gaithersburg, MD}
\date {}
\begin{document}

\maketitle

\begin{abstract}
For an integer sequence (with even sum), the closer that the sequence is to being regular, the more likely that the sequence is graphic. But how regular must a sequence be before it must always be graphic?  We show that for many sequences if all values are within $\frac{n-2}{4}$ of the mean degree value, then the sequence is graphic.  We also see how this result extends to show when a maximum difference between sequence values implies that a sequence is graphic.
\end{abstract}

\section{Introduction}

We will assume that a degree sequence is a sequence of non-negative integers whose sum is even. In others words, it is a sequence that could potentially be graphic, i.e. be a list of the number of adjacent edges for the nodes for some simple graph. 
When we speak a how regular a sequence is, we are speaking of how close, in some sense, the values in a sequence are to each other. Thus, a completely regular sequence will have all the same values.

 The definition of regularity in this article will be the maximum distance any of the values are in the sequence from the mean value of the sequence.  Another possible way to define regularity is to look at the difference between the largest and smallest value in the sequence.  We will also look at this definition by showing an application of our main result to this second measure.

One reason that we are interested in measuring the regularity of a sequence is because 
there is a close relationship between how regular a sequence is and whether that sequence is graphic.  In general, the more regular a sequence is, the more likely that the sequence is graphic.  This culminates with the regular or near-regular degree sequences (ones where the difference between the largest and smallest values is no more than one) always being graphic (\cite{Chen:1988}, Lemma 1).

We can think about this relationship of regularity and being graphic through the majorization (or dominance) operator. Without going into depth about the operator itself, one sequence majorizes another if the second one is ``more regular'' than the first in some sense. This idea is extended with the 
result that says if a graphic sequence majorizes another sequence, then the second sequence must also be graphic (\cite{Ruch:1979}, Theorem 1).  Thus, the majorization operator forms a lattice  over the integer sequences \cite{Brylawski:1973} where the graphic sequences are clustered at the bottom of the lattice.  So, for any non-graphic degree sequence, if we follow a chain down the lattice starting from that sequence, we will eventually reach a graphic sequence.  In other words, as we make a sequence more regular, it will eventually become graphic.

The question arises then, how regular must a sequence be in order to guarantee that it is graphic? It is obvious that simply being near-regular is a weak bound to this question. In this article, we give a tight bound to this question based on the measure of the  maximum difference from the mean value for the values in a sequence.  Additionally, we show that this result has implications between the relationship of the size of the maximum difference in a sequence and whether that sequence is forced to be graphic.

\section{Definitions and Needed Results}
We begin our discussion with  some needed definitions and results.  A {\it degree sequence} $\pi=(d_1, ..., d_n)$ is a sequence of $n$ number of non-negative integers. If a simple graph exists whose number of adjacent edges for each node matches the sequence $\pi$, then we say that $\pi$ is {\it graphic}.  When speaking about specific sequences, we will use a superscript to denote a repeated value in the sequence as a shorthand notation, e.g., $\seq{3^2} = \seq{3,3}$.  The largest value in a sequence $\pi$ is denoted as $\Delta(\pi)$, while the smallest is denoted as $\delta(\pi)$.  The sum of the sequence $\pi$ is
\begin{equation}
	s = \sum_{i=1}^{n} d_i,
\end{equation}
and so its mean degree value is $\frac{s}{n}$.
We use the standard notation of $\floor{x}$, $\ceil{x}$, and $\lbrace x \rbrace$ to denote the floor, ceiling, and fractional part of the value of $x$.

The {\it complement} of a degree sequence $\pi$
is the sequence 
\begin{equation}
\bar{\pi} = \left( n-1-d_n, ..., n-d_1-1 \right).
\end{equation}
It is straightforward to see that $\pi$ is graphic if and only if $\bar{\pi}$ is graphic.  

We say that a sequence $\pi$ is $c$-regular if every value $d_i \in \pi$ is within a distance of $c$ from the mean degree value, i.e. $\frac{s}{n}-c \leq d_i \leq \frac{s}{n}+c$.  We denote the smallest value of $c$ for a sequence as 
\begin{equation}
\reg(\pi) = \min \lbrace c\in \mathbb{R}^{+} |\  \forall d_i \in \pi,\  \frac{s}{n}-c \leq d_i \leq \frac{s}{n}+c \rbrace.
\end{equation}

An extremely useful tool in our discussion will be the following function,
\begin{equation} \label{eqn:d_define}
\df{x_1}{x_2}{x_3}{x_4} = \rf{x_1}{x_2}{x_3}{x_4}.
\end{equation}
The reason for considering this function follows from this theorem.
\begin{theorem}[\cite{Cloteaux:2018}, Theorem 3]
\label{thm:edge_bound}
Let $\pi$ be an integer sequence of length $n$ such that
$n-1 \geq \Delta(\pi) \geq \delta(\pi) \geq 0$  and with even
sum $s$ where $n\Delta(\pi) > s > n\delta(\pi)$.
If
\begin{equation} \label{eqn:edge_bound}
\df{\Delta(\pi)}{\delta(\pi)}{s}{n} = \rf{\Delta(\pi)}{\delta(\pi)}{s}{n} \geq 1,
\end{equation} 
then $\pi$ is graphic.
\end{theorem}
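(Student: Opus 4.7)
My plan is to combine a majorization reduction with the Erd\H{o}s--Gallai characterization of graphic sequences. First I would identify an extremal sequence $\sigma$ that depends only on the parameters $\Delta, \delta, s, n$: namely $\sigma = (\Delta^a,\, m,\, \delta^b)$, where $a$ is chosen as large as possible subject to $a\Delta + (n-a)\delta \leq s$, then $b = n-a-1$, and $m\in[\delta,\Delta]$ is the unique value making the sum equal to $s$ (collapsing the middle entry when it coincides with $\Delta$ or $\delta$). A short argument on prefix sums shows that every sorted $\pi$ with the given $(\Delta,\delta,s,n)$ is majorized by $\sigma$: no prefix of $\pi$ can contain more than $a$ entries equal to $\Delta$ without violating the sum constraint, and this controls all partial sums. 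By the theorem of Ruch cited in the introduction, it therefore suffices to prove that $\sigma$ itself is graphic.

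Next, I would observe that the function $D$ is invariant under complementation: the substitutions $\Delta \mapsto n-1-\delta$, $\delta \mapsto n-1-\Delta$, $s \mapsto n(n-1)-s$ simply swap the two fractions in the definition of $\df{\cdot}{\cdot}{\cdot}{\cdot}$. Since $\pi$ is graphic if and only if $\bar\pi$ is, I may assume without loss of generality that $a \leq \Delta$, which pins down where the binding Erd\H{o}s--Gallai inequality occurs.

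Finally, I would verify the Erd\H{o}s--Gallai inequalities for $\sigma$. Because $\sigma$ takes at most three distinct values, the EG slack is piecewise linear in $k$ with breakpoints only at $k\in\{\delta,\,a,\,a+1,\,\Delta\}$, so it is enough to inspect these. The tightest constraint arises at $k=a$: in the case without a middle value, this reads
\begin{equation*}
a\Delta \;\leq\; a(a-1) + b\delta,
\end{equation*}
or equivalently $a(\Delta - a + 1) \leq b\delta$. Using $a(\Delta-\delta) = s - n\delta$ and $b(\Delta-\delta) = n\Delta - s$ together with $a+b=n$, dividing through by $ab(\Delta-\delta)$ turns this into exactly
\begin{equation*}
\frac{n-\Delta-1}{b} + \frac{\delta}{a} \;\geq\; 1,
\end{equation*}
which is $\df{\Delta}{\delta}{s}{n} \geq 1$. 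The main obstacle is the bookkeeping around the middle value $m$ (when $(s-n\delta)/(\Delta-\delta)$ is not an integer) and the verification that the other three breakpoints give slack under the assumption $a\leq\Delta$; both reductions are routine but notationally heavy, since one must carefully track which of $\min(d_i,k)$ cuts in at each value of $k$. Once these are absorbed, the equivalence of the EG condition at $k=a$ with $D\geq 1$ closes the proof.
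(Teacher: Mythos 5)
First, a point of reference: the paper itself gives no proof of this statement---it is imported verbatim from \cite{Cloteaux:2018}, Theorem 3---so your argument has to stand on its own rather than be compared to an in-paper derivation. Your skeleton is a legitimate independent route, and its two load-bearing pieces check out. The majorization step is correct: the prefix sums of $\sigma=(\Delta^a,m,\delta^b)$ are exactly $\min(k\Delta,\ s-(n-k)\delta)$ for every $k$, the pointwise maximum over all sequences with the given extremes and sum, so $\sigma$ majorizes every such $\pi$ and the Ruch--Gutman theorem reduces the problem to $\sigma$. The central algebra is also right: with $a^*=(s-n\delta)/(\Delta-\delta)$ and $b^*=n-a^*$, the condition $\df{\Delta}{\delta}{s}{n}\geq 1$ is equivalent to $b^*\delta\geq a^*(\Delta+1-a^*)$, which is the Erd\H{o}s--Gallai inequality at $k=a^*$ for the two-valued sequence.

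The gaps are in exactly the steps you defer as routine. (i) Your justification for checking only breakpoints---that the EG slack is piecewise linear in $k$---is false, since the term $k(k-1)$ is quadratic; the finite check is still available, but it must be invoked via the known refinement that EG need only be verified at indices $k$ with $d_k>d_{k+1}$, not via piecewise linearity. (ii) More substantively, passing from the identity at $a^*$ to the integer inequality at $a=\floor{a^*}$ is not bookkeeping: $k\mapsto k(\Delta+1-k)$ is concave, so rounding $a^*$ down can \emph{increase} the left side of the critical inequality while $b=n-a-1<b^*$ shrinks the right side, and the deficit must be absorbed by the middle term $\min(m,a)$; whether it is absorbed depends on the sign of $\Delta+1-2a$, which is precisely where your ``WLOG $a\leq\Delta$'' has to do real work. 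That WLOG is attainable (one of $a^*\leq\Delta$ or $\bar{a}^*\leq\bar{\Delta}$ always holds because $\Delta\geq\delta+1$), but you assert it without proof, and the claim that $k=a$ is the binding index among $\delta<k\leq a$ also silently uses $a\leq\Delta+1$. The indices $k=a+1$ (problematic when $m>a$) and $k>a+1$ are not addressed at all. My own check suggests these can all be pushed through under your WLOG, so the approach is salvageable, but as written the proof is open precisely at the step that separates the clean continuous identity from the discrete statement.
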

Note that Equation \eqref{eqn:d_define} is
invariant for complement sequences, meaning that 
\begin{equation}
\df{a}{b}{s}{n} = \df{\bar{a}}{\bar{b}}{\bar{s}}{n},
\end{equation}
where $\bar{a} = (n-1)-b$, $\bar{b} = (n-1)-a$, and  $\bar{s} = n(n-1) - s$.

By substituting the values $\frac{s}{n}+c$ and $\frac{s}{n}-c$ for the top and bottom range of the values in a sequence, we derive the following equation,
\begin{equation} \label{eqn:d_val}
   \df{\frac{s}{n}+c}{\frac{s}{n}-c}{s}{n} = \frac{2(n-(2c+1))}{n}.
\end{equation}
From Equation \eqref{eqn:d_val}, we make several observations.
\begin{observation}
For the values $s_1,s_2,n,c_1,c_2,d \in \mathbb{R}^{+}_{*}$, then
\begin{itemize}
\item[]
\begin{equation} \label{eqn:d_equality}
\df{\frac{s_1}{n}+\frac{n-2}{4}}{\frac{s_1}{n}-\frac{n-2}{4}}{s_1}{n} = 1,
\end{equation}
\item[]
\begin{equation} \label{eqn:d_shrink}
\df{\frac{s_1}{n}+c_1}{\frac{s_1}{n}-c_1}{s_1}{n} =
\df{\frac{s_1}{n}+c_2}{\frac{s_1}{n}-c_2}{s_1}{n} +
\frac{4(c_2-c_1)}{n},
\end{equation}
\item[]
\begin{equation} \label{eqn:d_asym}
\df{\frac{s_1}{n}+c_1}{\frac{s_1}{n}-c_2}{s_1}{n} =
\df{\frac{s_2}{n}+c_1}{\frac{s_2}{n}-c_2}{s_2}{n}+
\frac{d}{n} \left( \frac{c_2^2-c_1^2}{c_1 c_2} \right),
\end{equation}
where $s_2 = s_1 + nd$.
\end{itemize}
\label{obs:d_func}
\end{observation}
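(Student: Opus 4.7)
The plan is to treat all three identities as direct computations built on the structure of $D$. The key observation that drives everything is that when we plug in $x_1 = \tfrac{s}{n}+c_1$, $x_2 = \tfrac{s}{n}-c_2$, $x_3 = s$, $x_4 = n$, the two denominators inside $D$ collapse to $x_1 x_4 - x_3 = c_1 n$ and $x_3 - x_2 x_4 = c_2 n$, which makes everything transparent. For \eqref{eqn:d_equality} and \eqref{eqn:d_shrink} I would simply invoke \eqref{eqn:d_val}, which is already in closed form; for \eqref{eqn:d_asym} I would briefly reopen the definition because the arguments are no longer symmetric about the mean.

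For \eqref{eqn:d_equality}, substitute $c=\tfrac{n-2}{4}$ into \eqref{eqn:d_val}. Then $2c+1 = \tfrac{n-2}{2}+1 = \tfrac{n}{2}$, so the numerator $2(n-(2c+1))$ becomes $n$ and the right-hand side is exactly $1$. For \eqref{eqn:d_shrink}, apply \eqref{eqn:d_val} to both occurrences of $D$: the difference between the two closed-form expressions is
\[
\frac{2(n-(2c_1+1))}{n} - \frac{2(n-(2c_2+1))}{n} = \frac{4(c_2-c_1)}{n},
\]
which rearranges to the claimed identity.

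Equation \eqref{eqn:d_asym} requires a little more care because \eqref{eqn:d_val} only treats the mean-symmetric case. I would expand $D\left(\tfrac{s}{n}+c_1,\tfrac{s}{n}-c_2,s,n\right)$ directly from the definition. After substituting, the prefactor $x_1-x_2$ equals $c_1+c_2$, and the bracketed sum reduces to
\[
\frac{n-\tfrac{s}{n}-c_1-1}{c_1 n} + \frac{\tfrac{s}{n}-c_2}{c_2 n}.
\]
The only $s$-dependent piece in this sum is $\tfrac{s(c_1-c_2)}{c_1 c_2 n^2}$; all other terms depend solely on $n, c_1, c_2$. Replacing $s_1$ by $s_2 = s_1 + nd$ therefore shifts $D$ by exactly
\[
(c_1+c_2)\cdot\frac{nd(c_1-c_2)}{c_1 c_2 n^2} = \frac{d(c_1^2-c_2^2)}{c_1 c_2\, n},
\]
so $D$ at $s_1$ exceeds $D$ at $s_2$ by $\tfrac{d}{n}\cdot\tfrac{c_2^2-c_1^2}{c_1 c_2}$, which is the claimed relation.

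There is no real obstacle here; the whole observation is algebraic, and the only mildly delicate step is isolating the $s$-dependent contribution in \eqref{eqn:d_asym} so that the $(c_2^2-c_1^2)/(c_1 c_2)$ factor appears in its final form rather than being hidden inside a longer rational expression. I would present the three parts in sequence, doing the first two in one or two lines each via \eqref{eqn:d_val} and reserving a short calculation for the third.
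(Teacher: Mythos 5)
Your computations are correct and follow essentially the same route as the paper, which presents the observation as a direct consequence of Equation \eqref{eqn:d_val} and the definition \eqref{eqn:d_define} without further elaboration. Your explicit expansion for \eqref{eqn:d_asym} (noting that the denominators collapse to $c_1 n$ and $c_2 n$ and isolating the $s$-dependent term) correctly fills in the asymmetric case that \eqref{eqn:d_val} alone does not cover.
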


In addition, if we rewrite the sum as $s = n \left( \flfr{s}{n} + \frfr{s}{n} \right)$, then we can derive the form,  
\begin{equation} \label{eqn:floor_form}
    \df{\flfr{s}{n}+c}{\flfr{s}{n}-c}{s}{n} = 2c \left( \frac{(cn - c -2c^2) + \frfr{s}{n}(n-2\flfr{s}{n}-1)}{n(c+\frfr{s}{n})(c-\frfr{s}{n})} \right).
\end{equation}
It follows from Equation \eqref{eqn:floor_form}, that for $s_1$ and $s_2$ where $\flfr{s_1}{n} = \flfr{s_2}{n}$ and $s_1 \leq s_2$ then,
\begin{equation} \label{eqn:fl_cmp}
\df{\flfr{s_1}{n}+c}{\flfr{s_1}{n}-c}{s_1}{n} \leq \df{\flfr{s_2}{n}+c}{\flfr{s_2}{n}-c}{s_2}{n}
\end{equation}
\section{Values Around the Degree Mean}

We are now ready to state our main result. This result breaks the behavior of the sequences into cases that depend on the value of their degree mean.  One point that we will revisit is that for the sequences whose mean degree is in the center of possible values, the following bound only depends on the length of the sequence.

\begin{theorem}
Let $\pi$ be an integer sequence of length $n$ such that $0 \leq \delta(\pi) \leq
\Delta(\pi) \leq n-1$ and with even sum $s$. The
sequence $\pi$ is graphic if,
\begin{enumerate}
\item $\reg(\pi) \leq \frac{n-2}{4}$ where $\frac{n-2}{4} \leq \frac{s}{n} \leq \frac{3n-2}{4}$,
\item $\reg(\pi) \leq n-1 - \frac{s}{n}$ where $\frac{3n-2}{4} < \frac{s}{n} \leq n-1$,
\item $\reg(\pi) \leq \frac{s}{n}$ where  $0 \leq \frac{s}{n} < \frac{n-2}{4}$.
\end{enumerate}
\end{theorem}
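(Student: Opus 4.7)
The plan is to apply Theorem~\ref{thm:edge_bound}, reducing the task to verifying $D(\Delta(\pi),\delta(\pi),s,n)\ge 1$ in each case. If $\reg(\pi)=0$ then $\pi$ is a regular sequence with even sum and hence graphic, so I may assume $\reg(\pi)>0$, which forces $\delta(\pi)<s/n<\Delta(\pi)$ and places us inside the hypotheses of Theorem~\ref{thm:edge_bound}.

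For Case~1, the anchor is Equation~(\ref{eqn:d_equality}): the symmetric extreme gives $D(\tfrac{s}{n}+\tfrac{n-2}{4},\tfrac{s}{n}-\tfrac{n-2}{4},s,n)=1$. Writing $c=\reg(\pi)\le\tfrac{n-2}{4}$, we have $\Delta(\pi)\le s/n+c$ and $\delta(\pi)\ge s/n-c$, and I would use Equations~(\ref{eqn:d_shrink}) and~(\ref{eqn:d_asym}) of Observation~\ref{obs:d_func} to track how $D$ changes as the endpoints shrink and the mean shifts, concluding $D(\Delta(\pi),\delta(\pi),s,n)\ge 1$. Cases~2 and~3 are handled jointly: Case~2 reduces to Case~3 by the complement invariance of $D$ and of graphic-ness, since $\reg(\bar\pi)=\reg(\pi)$ and $\bar s/n=n-1-s/n<(n-2)/4$ puts $\bar\pi$ in Case~3. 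For Case~3 itself, $\reg(\pi)\le s/n$ forces $\delta(\pi)\ge 0$ and $\Delta(\pi)\le 2s/n$, and the worst-case symmetric evaluation $D(2s/n,0,s,n)=2(n-1-2s/n)/n$ is at least~$1$ precisely when $s/n\le \tfrac{n-2}{4}$, which is the Case~3 hypothesis.

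The main obstacle is the monotonicity step. Writing $c_1=\Delta-s/n$, $c_2=s/n-\delta$, $\mu=s/n$, and $m=n-1-\mu$, one has $D=\tfrac{c_1+c_2}{n}\bigl[\tfrac{m}{c_1}+\tfrac{\mu}{c_2}-2\bigr]$, and this real-valued function is \emph{not} bounded below by $1$ on $(0,\tfrac{n-2}{4}]^2$: asymmetric real $(c_1,c_2)$ inside the rectangle can make $D<1$. What saves the argument is the integrality of $\Delta$ and $\delta$: the admissible pairs live on a lattice offset by $\{s/n\}$, and on that lattice the minimum of $D$ is attained at the corner $\Delta^*=\lfloor s/n+c\rfloor$, $\delta^*=\lceil s/n-c\rceil$ and turns out to be at least~$1$. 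Pinning down this lattice minimum cleanly, most likely through casework on $\{s/n\}$ combined with Equations~(\ref{eqn:floor_form}) and~(\ref{eqn:fl_cmp}) to rewrite $D$ in a floor-friendly form, is the delicate technical step.
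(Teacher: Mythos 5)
Your roadmap coincides with the paper's strategy almost point for point: invoke Theorem~\ref{thm:edge_bound}, anchor at the identity \eqref{eqn:d_equality}, reduce to the extreme lattice pair $\Delta^*=\floor{\frac{s}{n}+\frac{n-2}{4}}$, $\delta^*=\ceil{\frac{s}{n}-\frac{n-2}{4}}$, and exploit integrality because the real-valued function $D$ genuinely dips below $1$ at asymmetric points of the rectangle. That diagnosis is correct and is exactly why the paper cannot argue by continuous monotonicity. Your complement reduction of Part~2 to Part~3 is also sound, since $\reg(\bar\pi)=\reg(\pi)$ and $\frac{\bar s}{n}=n-1-\frac{s}{n}<\frac{n-2}{4}$.

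The problem is that the proposal stops precisely where the proof begins. The entire substance of the paper's argument is the verification that $\df{\floor{\frac{s}{n}+\frac{n-2}{4}}}{\ceil{\frac{s}{n}-\frac{n-2}{4}}}{s}{n}\geq 1$, carried out by splitting into the four cases of Equations \eqref{eqn:up_bnd} and \eqref{eqn:lw_bnd} according to whether $\fr{\frac{s}{n}}+\fr{\frac{n-2}{4}}\geq 1$ and whether $\fr{\frac{s}{n}}>\fr{\frac{n-2}{4}}$: the symmetric case follows from \eqref{eqn:fl_cmp}, \eqref{eqn:d_asym} and \eqref{eqn:d_equality}; one mixed case reduces to the symmetric one via the complement identity $\fr{\frac{s}{n}}=1-\fr{\frac{\bar s}{n}}$; and the two asymmetric cases require recentering the endpoints at $\flfr{s}{n}+\frac{1}{2}$ together with the observation that $\fr{\frac{n-2}{4}}\geq\frac{1}{2}$ there. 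You name the right tools but explicitly defer this computation as ``the delicate technical step,'' so no inequality is actually established. The same gap recurs in your Part~3: evaluating $D$ at the real point $(2\frac{s}{n},\,0)$ says nothing yet about the integer extreme $\floor{2\frac{s}{n}}$ or about non-extremal integer pairs. Finally, your assertion that the minimum of $D$ over admissible integer pairs is attained at the corner $(\Delta^*,\delta^*)$ is itself left unproved, and given that you have shown $D$ is not monotone on the real rectangle, it does not come for free; some monotonicity or majorization argument is needed to pass from the extremal pair to an arbitrary sequence with $\reg(\pi)\leq\frac{n-2}{4}$. As it stands the proposal is an accurate map of the proof, not the proof.
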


\begin{proof}
In order to show Part 1 of this result, we want to show that for any sequence $\pi$ with even sum, where $\frac{n-2}{4} \leq \frac{s}{n} \leq \frac{3n-2}{4}$ and $\Delta(\pi) = \floor{ \frac{s}{n} + \frac{n-2}{4}}$ and $\delta(\pi) =  \ceil{ \frac{s}{n} - \frac{n-2}{4}}$ is graphic. We notice that there are four possible cases for these values depending on the fractional parts of $\frac{s}{n}$ and $\frac{n-2}{4}$.  These cases are the combinations of the following equations:
\begin{equation} \label{eqn:up_bnd}
   \floor{ \frac{s}{n} + \frac{n-2}{4}} =
   \begin{cases}
        \floor{\frac{s}{n}}+\floor{\frac{n-2}{4}}+1,& 
        \text{if } \fr{\frac{s}{n}}+\fr{\frac{n-2}{4}} \geq 1 \\
        \floor{\frac{s}{n}}+\floor{\frac{n-2}{4}},&         \text{otherwise,}
    \end{cases}
\end{equation}

\begin{equation} \label{eqn:lw_bnd}
   \ceil{ \frac{s}{n} - \frac{n-2}{4}} =
   \begin{cases}
        \floor{\frac{s}{n}}-\floor{\frac{n-2}{4}}+1,& 
        \text{if } \fr{\frac{s}{n}} > \fr{\frac{n-2}{4}} \\
        \floor{\frac{s}{n}}-\floor{\frac{n-2}{4}},&         \text{otherwise.}
    \end{cases}
\end{equation}

We will find working with the complement sequences useful in certain situations.  We can link the bounds between a sequence and its complement with the following observation,
\begin{equation} \label{eqn:comp_frac}
\frfr{s}{n} = 1 - \frfr{\bar{s}}{n}.
\end{equation}
Extending the Equation \eqref{eqn:comp_frac} to the conditions for  Equations \eqref{eqn:up_bnd} and \eqref{eqn:lw_bnd} we derive a connection between these bounds to their complement sequences,
\begin{equation}
 \fr{\frac{s}{n}} \leq \fr{\frac{n-2}{4}} \Longleftrightarrow 1 \leq \fr{\frac{\bar{s}}{n}}+\fr{\frac{n-2}{4}}.
\end{equation}

We now show this result by examining it in term of Equation \eqref{eqn:d_define}. For each case, we want to show that this value is greater than or equal to one.
\begin{caseof}
\case{$\df{\floor{\frac{s}{n}+\frac{n-2}{4}}}{\ceil{\frac{s}{n}- \frac{n-2}{4}}}{s}{n} = \df{\flfr{s}{n}+\flfr{n-2}{4}}{\flfr{s}{n}- \flfr{n-2}{4}}{s}{n} $}

This case follows from an application of our earlier observations.
{\footnotesize
  \setlength{\abovedisplayskip}{6pt}
  \setlength{\belowdisplayskip}{\abovedisplayskip}
  \setlength{\abovedisplayshortskip}{0pt}
  \setlength{\belowdisplayshortskip}{3pt}
\begin{multline}
 \df{\flfr{s}{n}+\flfr{n-2}{4}}{\flfr{s}{n}- \flfr{n-2}{4}}{s}{n} \\
 \begin{aligned}
 &\geq  \df{\flfr{s}{n}+\flfr{n-2}{4}}{\flfr{s}{n}- \flfr{n-2}{4}}{s-n\frfr{s}{n}}{n} & \text{(Eq. \ref{eqn:fl_cmp})}\\
 &\geq \df{\flfr{s}{n}+\frac{n-2}{4}}{\flfr{s}{n}- \frac{n-2}{4}}{s-n\frfr{s}{n}}{n} & \text{(Eq. \ref{eqn:d_asym})} \\
 &= 1 & \text{(Eq. \ref{eqn:d_equality})} \\
\end{aligned}
\end{multline}
}%

\case{$\df{\floor{\frac{s}{n}+\frac{n-2}{4}}}{\ceil{\frac{s}{n}- \frac{n-2}{4}}}{s}{n} = \df{\flfr{s}{n}+\flfr{n-2}{4}+1}{\flfr{s}{n}- \flfr{n-2}{4}+1}{s}{n} $}

If $\frfr{s}{n} > \frfr{n-2}{4}$, that implies $1 > \frfr{n-2}{4} + \frfr{\bar{s}}{n}$. Also, if $\frfr{s}{n} + \frfr{n-2}{n} \geq 1$ implies that $\frfr{\bar{s}}{n} \leq \frfr{n-2}{4}$. Thus this case reduces to Case 1 through its complement sequence:
{\footnotesize
  \setlength{\abovedisplayskip}{6pt}
  \setlength{\belowdisplayskip}{\abovedisplayskip}
  \setlength{\abovedisplayshortskip}{0pt}
  \setlength{\belowdisplayshortskip}{3pt}
\begin{multline}
\df{\flfr{s}{n}+\flfr{n-2}{4}+1}{\flfr{s}{n}- \flfr{n-2}{4}+1}{s}{n} \\
\begin{aligned}
  &= \df{\flfr{\bar{s}}{n}+\flfr{n-2}{4}}{\flfr{\bar{s}}{n}- \flfr{n-2}{4}}{\bar{s}}{n} \\
 &\geq 1.
\end{aligned}
\end{multline}
}%
\case{$\df{\floor{\frac{s}{n}+\frac{n-2}{4}}}{\ceil{\frac{s}{n}- \frac{n-2}{4}}}{s}{n} = \df{\flfr{s}{n}+\flfr{n-2}{4}+1}{\flfr{s}{n}- \flfr{n-2}{4}}{s}{n} $}

For this case we make two observations.  The first is, from Equation \eqref{eqn:comp_frac},  we can assume without a loss of generality that $\frfr{s}{n} \leq \frac{1}{2}$ (or else we could simply work with the complement sequence).  This means that $n\frfr{s}{n} \leq \frac{n}{2}$.

Additionally, since  from the case conditions $\frfr{n-2}{4} \geq \frfr{s}{n}$ and $\frfr{s}{n} + \frfr{n-2}{4} \geq 1$, which implies that $\frfr{n-2}{4} \geq \frfr{\bar{s}}{n}$, then we establish that  $\frfr{n-2}{4} \geq \frac{1}{2}$ or, equivalently,  $\flfr{n-2}{4} + \frac{1}{2} \leq \frac{n-2}{4}$.   Using these observations,  we show the following sequence: 
{\footnotesize
  \setlength{\abovedisplayskip}{6pt}
  \setlength{\belowdisplayskip}{\abovedisplayskip}
  \setlength{\abovedisplayshortskip}{0pt}
  \setlength{\belowdisplayshortskip}{3pt}
\begin{multline}
\df{\flfr{s}{n}+\flfr{n-2}{4}+1}{\flfr{s}{n}-\flfr{n-2}{4}}{s}{n}\\
\begin{aligned}
 &= \df{\left(\flfr{s}{n}+\frac{1}{2}\right)+\left(\flfr{n-2}{4}+\frac{1}{2}\right)}{\left(\flfr{s}{n}+\frac{1}{2}\right)-\left(\flfr{n-2}{4}+\frac{1}{2}\right)}{s}{n} \\
  &\geq \df{\left(\flfr{s}{n}+\frac{1}{2}\right)+\left(\flfr{n-2}{4}+\frac{1}{2}\right)}{\left(\flfr{s}{n}+\frac{1}{2}\right)-\left(\flfr{n-2}{4}+\frac{1}{2}\right)}{s-n\frfr{s}{n}+\frac{n}{2}}{n}\\
 &\geq \df{\left(\flfr{s}{n}+\frac{1}{2}\right)+\frac{n-2}{4}}{\left(\flfr{s}{n}+\frac{1}{2}\right)-\frac{n-2}{4}}{s-n\frfr{s}{n}+\frac{n}{2}}{n}\\
 &= 1. \\
\end{aligned}
\end{multline}
}%

\case{$\df{\floor{\frac{s}{n}+\frac{n-2}{4}}}{\ceil{\frac{s}{n}- \frac{n-2}{4}}}{s}{n} = \df{\flfr{s}{n}+\flfr{n-2}{4}}{\flfr{s}{n}- \flfr{n-2}{4}+1}{s}{n} $}

To show $\df{\flfr{s}{n}+\flfr{n-2}{4}}{\flfr{s}{n}- \flfr{n-2}{4}+1}{s}{n} \geq 1$, we begin with the equivalent expression 
{\footnotesize
  \setlength{\abovedisplayskip}{6pt}
  \setlength{\belowdisplayskip}{\abovedisplayskip}
  \setlength{\abovedisplayshortskip}{0pt}
  \setlength{\belowdisplayshortskip}{3pt}
\begin{multline}
\df{\flfr{s}{n}+\flfr{n-2}{4}}{\flfr{s}{n}- \flfr{n-2}{4}+1}{s}{n} = \\ \df{\left(\flfr{s}{n}+\frac{1}{2}\right)+\left(\flfr{n-2}{4}-\frac{1}{2}\right)}{\left(\flfr{s}{n}+\frac{1}{2}\right)-\left(\flfr{n-2}{4}-\frac{1}{2}\right)}{s}{n},\\
\end{multline}
}%
and then argue similarly to the last case.
\end{caseof}
This establishes that $\df{\floor{\frac{s}{n}+\frac{n-2}{4}}}{\ceil{\frac{s}{n}- \frac{n-2}{4}}}{s}{n} \geq 1$, and then from Theorem \ref{thm:edge_bound} that the sequence $\pi$ is graphic.
For the other two cases, we can immediately use identical reasoning as the first case, but with the added constraints that $\Delta(\pi) \leq n-1$ and $\delta(\pi) \geq 0$.
\end{proof}

In general, this result is a tight bound on how regular a sequence must be in order to force it to be graphic.  In order to see this, we define a family of sequences where $n$ evenly divides $s$ (i.e., $n|s$) and where $n$ is even.  We write out the sequences in this family as
\begin{equation}
\seq{ \left(\frac{s}{n}+c \right)^{\frac{n}{2}}, \left( \frac{s}{n}-c \right)^{\frac{n}{2}} }.
\end{equation}
If these sequences are graphic then the following Erd\H{o}s-Gallai inequality \cite{Erdos:1960} must hold: 
\begin{equation}
\frac{n}{2} \left( \frac{s}{n}+c \right) \leq \frac{n}{2} \left( \frac{n}{2} - 1 \right) + \frac{n}{2} \left( \frac{s}{n} -c \right). 
\end{equation}
It is easy to see that this inequality holds only when 
\begin{equation}
c \leq \frac{n-2}{4},
\end{equation}
thus this family of sequences is always non-graphic when $c$ is greater than our bound.

\section{Graphic Differences}
We now come back to the point that for sequences where $\frac{n-2}{4} \leq \frac{s}{n} \leq \frac{3n-2}{4}$, the regularity bound only depends on the length of the sequence. As a consequence, we can extend this result to apply to the maximum difference ($\Delta(\pi) - \delta(\pi)$) of a sequence $\pi$ in this set.

Since our earlier bound is only dependent on the length of those sequence, we can see that there is a function $m(n)$ such that if
\begin{equation}
    \Delta(\pi) - \delta(\pi) \leq m(n),
\end{equation}
then $\pi$ must be graphic.  In fact, the previous result provides bounds for the function $m(n)$.  The value of $m(n)$ cannot be greater than $\frac{n-2}{2}$, since we have already seen a counterexample in the previous section.  Additionally, if we set the difference $\Delta(\pi) - \delta(\pi) = \frac{n-2}{4}$, then these sequences will also be graphic from the last theorem, no matter the value of the mean degree.  Thus a simple bound for $m(n)$ is
\begin{equation}
\frac{n-2}{4} \leq m(n) \leq \frac{n-2}{2}.
\end{equation}

While we do not have a precise formulation of the function $m(n)$, we have performed a computational investigation of it.  
In Appendix \ref{sec:table_diff}, we present a list of maximum graphic distances for the lengths up to 100 composed from an exhaustive computer search.  From an examination of these values, it appears that the lower bound on $m(n)$ is close to $\frac{5}{12}n$, which is much larger than $\frac{n-2}{4}$.  Showing an exact bound or formulation for $m(n)$ remains an open research problem.

{ \singlespacing
\bibliographystyle{nist}
\bibliography{regularity_bound}
}

\appendix
\section{Table of Graphic Differences} \label{sec:table_diff}
This table shows the maximum values $m(n)$ such that if a degree sequence $\pi$ where $\frac{n-2}{4} \leq \frac{s}{n} \leq \frac{3n-2}{4}$, and $\Delta(\pi) - \delta(\pi) \leq m(n)$, then $\pi$ is graphic.
 \begin{center}
 \begin{longtable}{|c|c|c|c|}
\caption{Maximum Graphic Differences}\\
\hline
\textbf{$n$} & \textbf{$m(n)$} & \textbf{Minimal Non-graphic Example} \\
\hline
\endfirsthead
\multicolumn{3}{c}%
{\tablename\ \thetable\ -- \textit{Continued from previous page}} \\
\hline
\textbf{$n$} & \textbf{$m(n)$} & \textbf{Minimal Non-graphic Example} \\
\hline
\endhead
\hline \multicolumn{4}{r}{\textit{Continued on next page}} \\
\endfoot
\hline
\endlastfoot
4 & 1 & $(1^{2}, 3^{2})$ \\
5 & 1 & $(2^{2}, 4^{3})$ \\
6 & 2 & $(1^{4}, 4^{2})$ \\
7 & 2 & $(1^{4}, 4^{3})$ \\
8 & 3 & $(1^{6}, 5^{2})$ \\
9 & 3 & $(1^{6}, 4^{1}, 5^{2})$ \\
10 & 3 & $(1^{7}, 5^{3})$ \\
11 & 4 & $(1^{8}, 6^{3})$ \\
12 & 4 & $(1^{8}, 4^{1}, 6^{3})$ \\
13 & 4 & $(1^{8}, 6^{5})$ \\
14 & 5 & $(1^{9}, 5^{1}, 7^{4})$ \\
15 & 5 & $(1^{9}, 6^{1}, 7^{5})$ \\
16 & 6 & $(1^{10}, 6^{1}, 8^{5})$ \\
17 & 6 & $(1^{10}, 6^{1}, 8^{6})$ \\
18 & 7 & $(1^{11}, 7^{1}, 9^{6})$ \\
19 & 7 & $(1^{11}, 8^{1}, 9^{7})$ \\
20 & 7 & $(2^{13}, 10^{7})$ \\
21 & 8 & $(1^{12}, 8^{1}, 10^{8})$ \\
22 & 8 & $(2^{14}, 5^{1}, 11^{7})$ \\
23 & 9 & $(1^{13}, 10^{1}, 11^{9})$ \\
24 & 9 & $(2^{15}, 6^{1}, 12^{8})$ \\
25 & 10 & $(1^{14}, 10^{1}, 12^{10})$ \\
26 & 10 & $(2^{16}, 7^{1}, 13^{9})$ \\
27 & 11 & $(1^{15}, 12^{1}, 13^{11})$ \\
28 & 11 & $(2^{17}, 8^{1}, 14^{10})$ \\
29 & 11 & $(2^{17}, 8^{1}, 14^{11})$ \\
30 & 12 & $(2^{18}, 9^{1}, 15^{11})$ \\
31 & 12 & $(2^{18}, 10^{1}, 15^{12})$ \\
32 & 12 & $(3^{20}, 4^{1}, 16^{11})$ \\
33 & 13 & $(2^{19}, 10^{1}, 16^{13})$ \\
34 & 13 & $(3^{21}, 5^{1}, 17^{12})$ \\
35 & 14 & $(2^{20}, 12^{1}, 17^{14})$ \\
36 & 14 & $(3^{22}, 6^{1}, 18^{13})$ \\
37 & 15 & $(2^{21}, 12^{1}, 18^{15})$ \\
38 & 15 & $(3^{23}, 7^{1}, 19^{14})$ \\
39 & 16 & $(2^{22}, 14^{1}, 19^{16})$ \\
40 & 16 & $(3^{24}, 8^{1}, 20^{15})$ \\
41 & 16 & $(4^{27}, 19^{1}, 21^{13})$ \\
42 & 17 & $(3^{25}, 9^{1}, 21^{16})$ \\
43 & 17 & $(4^{28}, 22^{15})$ \\
44 & 18 & $(3^{26}, 10^{1}, 22^{17})$ \\
45 & 18 & $(3^{26}, 10^{1}, 22^{18})$ \\
46 & 18 & $(4^{29}, 22^{1}, 23^{16})$ \\
47 & 19 & $(3^{27}, 12^{1}, 23^{19})$ \\
48 & 19 & $(4^{30}, 24^{18})$ \\
49 & 20 & $(3^{28}, 12^{1}, 24^{20})$ \\
50 & 20 & $(4^{30}, 5^{1}, 25^{19})$ \\
51 & 21 & $(3^{29}, 14^{1}, 25^{21})$ \\
52 & 21 & $(4^{31}, 6^{1}, 26^{20})$ \\
53 & 22 & $(3^{30}, 14^{1}, 26^{22})$ \\
54 & 22 & $(4^{32}, 7^{1}, 27^{21})$ \\
55 & 22 & $(5^{35}, 25^{1}, 28^{19})$ \\
56 & 23 & $(4^{33}, 8^{1}, 28^{22})$ \\
57 & 23 & $(5^{36}, 24^{1}, 29^{20})$ \\
58 & 24 & $(4^{34}, 9^{1}, 29^{23})$ \\
59 & 24 & $(5^{37}, 27^{1}, 30^{21})$ \\
60 & 25 & $(4^{35}, 10^{1}, 30^{24})$ \\
61 & 25 & $(5^{38}, 28^{1}, 31^{22})$ \\
62 & 25 & $(5^{38}, 31^{24})$ \\
63 & 26 & $(5^{39}, 31^{1}, 32^{23})$ \\
64 & 26 & $(5^{39}, 29^{1}, 32^{24})$ \\
65 & 27 & $(4^{37}, 12^{1}, 32^{27})$ \\
66 & 27 & $(5^{40}, 31^{1}, 33^{25})$ \\
67 & 28 & $(4^{38}, 14^{1}, 33^{28})$ \\
68 & 28 & $(5^{41}, 33^{1}, 34^{26})$ \\
69 & 28 & $(6^{43}, 35^{26})$ \\
70 & 29 & $(5^{42}, 35^{28})$ \\
71 & 29 & $(6^{44}, 32^{1}, 36^{26})$ \\
72 & 30 & $(5^{42}, 6^{1}, 36^{29})$ \\
73 & 30 & $(7^{47}, 17^{1}, 38^{25})$ \\
74 & 31 & $(5^{43}, 7^{1}, 37^{30})$ \\
75 & 31 & $(6^{46}, 30^{1}, 38^{28})$ \\
76 & 32 & $(5^{44}, 8^{1}, 38^{31})$ \\
77 & 32 & $(6^{47}, 31^{1}, 39^{29})$ \\
78 & 32 & $(7^{49}, 19^{1}, 40^{28})$ \\
79 & 33 & $(6^{48}, 34^{1}, 40^{30})$ \\
80 & 33 & $(7^{50}, 21^{1}, 41^{29})$ \\
81 & 34 & $(6^{49}, 35^{1}, 41^{31})$ \\
82 & 34 & $(7^{51}, 23^{1}, 42^{30})$ \\
83 & 35 & $(6^{50}, 38^{1}, 42^{32})$ \\
84 & 35 & $(7^{52}, 25^{1}, 43^{31})$ \\
85 & 35 & $(8^{54}, 12^{1}, 44^{30})$ \\
86 & 36 & $(6^{51}, 38^{1}, 43^{34})$ \\
87 & 36 & $(8^{55}, 15^{1}, 45^{31})$ \\
88 & 37 & $(6^{52}, 40^{1}, 44^{35})$ \\
89 & 37 & $(7^{54}, 28^{1}, 45^{34})$ \\
90 & 38 & $(6^{53}, 42^{1}, 45^{36})$ \\
91 & 38 & $(7^{55}, 31^{1}, 46^{35})$ \\
92 & 38 & $(8^{57}, 16^{1}, 47^{34})$ \\
93 & 39 & $(7^{56}, 32^{1}, 47^{36})$ \\
94 & 39 & $(8^{58}, 18^{1}, 48^{35})$ \\
95 & 40 & $(7^{57}, 35^{1}, 48^{37})$ \\
96 & 40 & $(8^{59}, 20^{1}, 49^{36})$ \\
97 & 40 & $(9^{62}, 50^{35})$ \\
98 & 41 & $(8^{60}, 22^{1}, 50^{37})$ \\
99 & 41 & $(9^{63}, 50^{1}, 51^{35})$ \\
100 & 42 & $(8^{61}, 24^{1}, 51^{38})$ \\ 
\end{longtable}
\end{center}







\end{document}